\newenvironment{customlegend}[1][]{%
	\begingroup
	\csname pgfplots@init@cleared@structures\endcsname
	\pgfplotsset{#1}%
}{%
\csname pgfplots@createlegend\endcsname
\endgroup
}%
\def\addlegendimage{\csname pgfplots@addlegendimage\endcsname}
\theoremstyle{remark} \newtheorem{lemma}{Lemma}
\theoremstyle{remark} \newtheorem{thm}{Theorem}
\theoremstyle{remark} 
\theoremstyle{remark} \newtheorem{prop}{Proposition}
\theoremstyle{remark} \newtheorem{rmk}{Remark}
\DeclareMathOperator{\diag}{diag}
\DeclareMathOperator{\re}{Re}
\DeclareMathOperator{\im}{Im}
\begin{document}
	\allowdisplaybreaks
	%
	\title{A Necessary Condition for Power Flow Insolvability in Power Distribution Systems with Distributed Generators}
	%
	%
	%
	
	\author{Zhaoyu Wang, \IEEEmembership{Member, IEEE}, Bai Cui, \IEEEmembership{Student Member, IEEE}, and Jianhui Wang, \IEEEmembership{Senior Member, IEEE}
		\thanks{The work of J. Wang is supported by the U.S. Department of Energy (DOE)'s Office of Electricity Delivery and Energy Reliability.}
		\thanks{Z. Wang is with the Department of Electrical and Computer Engineering, Iowa State University, Ames, IA 50011 USA (e-mail: wzy@iastate.edu).}
		\thanks{B. Cui is with the School of Electrical and Computer Engineering, Georgia Institute of Technology, Atlanta, GA 30332 USA (e-mail: bcui7@gatech.edu).}
		\thanks{J. Wang is with Argonne National Laboratory, Argonne, IL 60439 USA (e-mail: jianhui.wang@anl.gov).}
	}

	\maketitle
	
	\begin{abstract}
		This paper proposes a necessary condition for power flow insolvability in power distribution systems with distributed generators (DGs). We show that the proposed necessary condition indicates the impending singularity of the Jacobian matrix and the onset of voltage instability. We consider different operation modes of DG inverters, e.g., constant-power and constant-current operations, in the proposed method. A new index based on the presented necessary condition is developed to indicate the distance between the current operating point and the power flow solvability boundary. Compared to existing methods, the operating condition-dependent critical loading factor provided by the proposed condition is less conservative and is closer to the actual power flow solution space boundary. The proposed method only requires the present snapshots of voltage phasors to monitor the power flow insolvability and voltage stability. Hence, it is computationally efficient and suitable to be applied to a power distribution system with volatile DG outputs. The accuracy of the proposed necessary condition and the index is validated by simulations on a distribution test system with different DG penetration levels.
	\end{abstract}
	
	\begin{IEEEkeywords}
		Power flow analysis, power distribution systems, distributed generators, power system modeling, Wirtinger calculus.
	\end{IEEEkeywords}

	%
	\IEEEpeerreviewmaketitle

	\section{Introduction}
	%
	%
	%
	%
	\IEEEPARstart{T}{HE} increasing penetration of distributed generators (DGs) and the appearance of power electronic loads has imposed new challenges to the modeling, operation, and control of power distribution systems. The traditional analysis and operation paradigm of distribution systems needs to be changed to accommodate these new types of generators and loads. The ability to assess and maintain the security margins within the operational context of the growing deployment of DGs is important to modernized power distribution systems. The solvability of power flow equations is a desirable metric to indicate the security margins of power systems \cite{Overbye1}. This paper provides a necessary condition for power flow insolvability, i.e., a sufficient condition for power flow solvability, which can be used for the fast online assessment of static voltage stability of a power distribution system with a high penetration of inverter-interfaced DGs. 
	
	Because of the nonlinear nature of power flow equations, they are typically solved by Newton-type numerical techniques.
	Many conditions have been proposed to guarantee the existence of power flow solutions. The study in \cite{Overbye1} develops a Newton-Raphson-based algorithm to quantify the degree of insolvability by calculating the distance between the desired operating point and the closest solvable operating point. Reference \cite{Ilic1} investigates the conditions under which the unique and operationally acceptable solutions exist for the decoupled active and reactive power flow model. Fixed-point theorems are applied to derive sufficient conditions for the existence of unique power flow solution in \cite{Bolognani2} and \cite{Lesieutre1}. In \cite{Molzahn1}, the power flow solvability problem is formulated as a nonconvex optimization problem. In order to derive a sufficient condition for system insolvability the original problem needs to be convexified so that efficient algorithms can be applied to find the global optimum solution. Semidefinite relaxation technique is thus applied to convert the problem into a convex one, and a sufficient condition for system insolvability is derived. Reference \cite{Chiang1} proves the existence and uniqueness of power flow solutions in radial distribution networks through iterative methods.
	Reference \cite{Grijalva1} presents a necessary condition for power flow insolvability and demonstrates that at least one branch must reach its static transfer stability limit before the singularity of Jacobian matrix is reached. A recent work \cite{Simpson1} proposes a multi-bus short-circuit ratio to quantify the stress the grid is under. Qualitatively it is similar to the condition in \cite{Bolognani2}, but instead of separating the network from the loading, it combines them in a matrix-vector product. The integration of DGs further complicates the problems of power flow solvability and voltage stability. Studies have shown that voltage stability issues do exist in distribution networks \cite{Kersting1, Yan1, Liu1}. 
	
	
	This paper investigates the power flow solvability in a power distribution system with DGs. We propose a necessary condition for power flow insolvability due to saddle-node bifurcation. A saddle-node bifurcation occurs when two system equilibrium points coalesce and annihilate each other under slow parameter changes \cite{Kwatny1, Canizares1}. The saddle-node bifurcation phenomenon that we are interested in is concerned with the disappearance of normal power system operating points as system stresses under gradual load increase. As demonstrated in previous results \cite{Dobson1} the bifurcation points are irrelevant to load dynamics and correspond to points where solutions of the algebraic power flow equations are lost. A detailed theoretical proof shows that the proposed condition can be used to analyze a power distribution system with DGs that operate in different modes, e.g., constant-power and constant-current modes \cite{Wang1, Morsy1}, where we assume that for these operation modes, the real and imaginary parts of power/current are given. Based on the necessary condition, we design a new index to monitor the operating condition of a distribution system with a high DG penetration level, which can provide an accurate precursor to system overloading, when the generators are modeled as constant-current and/or constant-power sources. In comparison with existing methods, the proposed method can provide an accurate assessment which is less conservative, closer to the actual power flow solvability boundary and adaptive to the present operating point. The calculation of the proposed necessary condition and index only requires a snapshot of the present bus voltage and current. In addition, the proposed method requires a small computation effort, which makes it ideal for real-time applications.
	
	The remainder of this paper is organized as follows. Section \ref{modelling} introduces the power distribution system model and the proposed necessary condition. Section \ref{main} provides the theoretical proof for our proposed condition on power distribution systems with DGs. In Section \ref{simulations}, the numerical results are provided. Section \ref{discussion} discusses the simulation results and the physical implication of the proposed index. Section \ref{conclsn} concludes the paper with major findings.
	
	\section{Distribution System Model and Proposed Necessary Condition}
	\label{modelling}
	We conduct a per-phase analysis to a power distribution system with $n+1$ buses. The line section between buses $i$ and $k$ in the system is weighted by its complex admittance $y_{ik} \coloneqq 1/z_{ik} = g_{ik} + jb_{ik}$.
	
	It is assumed that the system contains a single substation which is modeled as a voltage-regulated source, i.e., the slack bus \cite{Kersting1}. The phase angle of the slack bus is fixed as a reference. Without loss of generality, we assume the slack bus has a voltage phasor $V_{\mathcal{S}} = 1\angle 0^{\circ}$. In addition, the system has $g$ DGs and $l = n - g$ loads. Tie buses that neither inject nor absorb power are assumed to be eliminated via standard methods such as that in \cite{Bolognani1}. Let the set of DG buses be $\mathcal{G} = \{ 1, \ldots, g \}$ and the set of load buses be $\mathcal{L} = \{ g+1, \ldots, n \}$. For PQ buses, the injected power is given by $S_i = P_i + jQ_i, \forall i \in \{\mathcal{G}, \ \mathcal{L}\}$.
	
	The system can be represented by the following equation
	\begin{equation}
		\begin{bmatrix}
			I_{\mathcal{S}} \\
			I
		\end{bmatrix} = 
		\begin{bmatrix}
			Y_{\mathcal{SS}} & Y_{\mathcal{SL}} \\
			Y_{\mathcal{LS}} & Y_{\mathcal{LL}}
		\end{bmatrix}
		\begin{bmatrix}
			V_{\mathcal{S}} \\
			V
		\end{bmatrix}
		\label{admit}
	\end{equation}
	where $I_{\mathcal{S}} \in \mathbb{C}^1$ is the slack bus current, $I = I_D + jI_Q \in \mathbb{C}^n$ is the vector of generator and load currents, and $V \in \mathbb{C}^n$ is the vector of generator and load voltages. The polarity of the currents is assumed to be out of the network through the buses.
	We obtain from (\ref{admit}) that
	\begin{equation}
		V = -Y_{\mathcal{LL}}^{-1}Y_{\mathcal{LS}}V_{\mathcal{S}} + Y_{\mathcal{LL}}^{-1}I
		\label{equi1}
	\end{equation}
	Define the vector of equivalent voltage to be $E \coloneqq -Y_{\mathcal{LL}}^{-1} Y_{\mathcal{LS}} V_{\mathcal{S}}$ and the impedance matrix to be $Z \coloneqq -Y_{\mathcal{LL}}^{-1}$. With the definitions, ({\ref{equi1}}) can be rewritten as
	\begin{equation}
		V = E - ZI
		\label{multiport}
	\end{equation}
	
	Given the bus power injection vector $S$, the vectors of voltage and current are related by 
	\begin{equation}
		S = \diag (I^*)V = \diag (I^*)E - \diag (I^*)ZI
		\label{ohm}
	\end{equation}
	where $I^*$ is the vector of complex conjugate of $I$ and $\diag (\cdot)$ denotes the diagonal matrix whose diagonal elements are the entries of the vector. The elements in (\ref{ohm}) can be written as
	\begin{subequations}
		\begin{align}
			\label{real}
			P_h & = \re\left( I_h^*E_h - I_h^*\sum\nolimits_{i=1}^n Z_{hi}I_i \right), \\
			\label{reac}
			Q_h & = \im\left( I_h^*E_h - I_h^*\sum\nolimits_{i=1}^n Z_{hi}I_i \right),
		\end{align}
	\end{subequations}
	where $h \in \{\mathcal{G},\mathcal{L}\}$ is either load bus or constant-power DG bus. Equations (\ref{real})--(\ref{reac}) define the power flow equations of the distribution system parameterized by bus current injections in rectangular coordinates. The adoption of current injections as state vectors can facilitate our derivation of the necessary condition.
	
	The power flow Jacobian of the model in (\ref{real})--(\ref{reac}) is defined as
	\begin{equation}
		J^{\mathcal{R}} = 
		\begin{bmatrix}
			\partial P / \partial I_D & \partial P/ \partial I_Q \\
			\partial Q / \partial I_D & \partial Q / \partial I_Q
		\end{bmatrix}.
		\label{jacob}
	\end{equation}
	The singularity of the conventional power flow Jacobian matrix---The Jacobian matrix whose power flow equations are parameterized by bus voltage phasors in polar coordinates---is commonly used as a necessary condition to indicate system loadability limit, which in turn marks the onset of voltage instability for a system with PQ buses \cite[Ch.~7]{Cutsem1}. Singularity of (\ref{jacob}) coincides with that of the conventional Jacobian matrix $J_{\mathrm{conv}}$ due to the chain rule,
	\begin{equation}
	J_{\mathrm{conv}} =J^{\mathcal{R}}
	\begin{bmatrix}
	\partial I_D / \partial |V| & \partial I_D / \partial \theta \\
	\partial I_Q / \partial |V| & \partial I_Q / \partial \theta
	\end{bmatrix}
	\end{equation}
	where $|V|$ and $\theta$ are vectors that represent element-wise magnitudes and angles of the voltage vector $V$, i.e., $|V|_i\coloneqq |V_i|$, $\theta_i \coloneqq \angle V_i$. Thus the singularity of (\ref{jacob}) can be used as an indicator of voltage instability.
	
    We propose the following necessary condition for the singularity of (\ref{jacob}),
	\begin{equation}
	\label{cond}
	\exists h \in \{\mathcal{L}, \mathcal{G} \} \text{ such that } |V_h| \leq \sum_{i = 1}^n |Z_{hi}I_i|, 
	\end{equation}
	where $h$ is either load bus or constant-power DG bus. The necessary condition (\ref{cond}) relates to the singularity of the Jacobian matrix (\ref{jacob}). The next step is to show that (\ref{jacob}) is always non-singular unless condition (\ref{cond}) is satisfied.
	
	Based on the necessary condition (\ref{cond}), an index that measures the criticality of system loading condition is proposed. The index is called $C$-index, and is defined as
	\begin{equation}
	C_h = \frac{|V_h|}{\sum_{i = 1}^n |Z_{hi}I_i|}, \ h \in \{\mathcal{L}, \mathcal{G} \}
	\end{equation}
	The system-wide $C$-index is defined as $C = \min\{C_h\}$. The system loadability limit is reached only if $C < 1$.

	\section{Main Result in Distribution System with DGs}
	\label{main}
	In classic power flow analysis, voltages are usually represented in polar coordinates, power is represented in rectangular coordinates, and Jacobian matrices are represented as real-valued matrices. As our subsequent formulation shows, the adoption of current injections as state variables in the power flow formulation relates the entries of power flow Jacobian with voltages, which can assist our analysis. We would like to demonstrate our approach on power power distribution networks with DGs by expressing power injections by currents as in (\ref{ohm}) and forming the power flow Jacobian matrix by taking partial derivatives as in (\ref{jacob}). However, the problem with this approach is that the entries in the Jacobian matrix do not have direct physical interpretations in an AC network, which makes it difficult to draw connections between (\ref{cond}) and the singularity of Jacobian matrix. 
	
	To solve the above mentioned challenge, we propose to formulate power flow Jacobian as a complex matrix via Wirtinger Calculus \cite{Remmert1, Hjorungnes1}. 
	
	\subsection{Wirtinger Calculus}
	\label{wirtinger}
	Given a complex function $f = u(x, y) + jv(x, y): \mathbb{C} \to \mathbb{C}$, $f$ is complex-differentiable ($\mathbb{C}$-differentiable) if it satisfies the Cauchy-Riemann condition, i.e.,
	\begin{equation}
	\frac{\partial u}{\partial x} = \frac{\partial v}{\partial y}, \quad \frac{\partial u}{\partial y} = -\frac{\partial v}{\partial x}
	\end{equation}
	
	Based on the condition, it can be verified that power flow equations are generally not $\mathbb{C}$-differentiable. Specifically, complex conjugation $g(z) = z^* = x - jy$ is not $\mathbb{C}$-differentiable as
	\begin{equation}
	1 = \frac{\partial u}{\partial x} \neq \frac{\partial v}{\partial y} = -1
	\end{equation}
	
	Given a complex function $f: \mathbb{C} \to \mathbb{C}$, we define the function $F: \mathbb{R}^2 \to \mathbb{C}$ as $F(x,y) = U(x,y) + jV(x,y) = f(x+jy)$. The function $f$ is said to be $\mathbb{R}$-differentiable if
	\begin{equation}
	\frac{\partial U}{\partial x}, \quad \frac{\partial U}{\partial y}, \quad \frac{\partial V}{\partial x}, \quad \frac{\partial V}{\partial y}
	\end{equation}
	all exist.
	
	Assume $f$ is $\mathbb{R}$-differentiable, the total derivative of $F$ is given by
	\begin{align}
	\mathrm{d}F &= \left( \frac{\partial U(x,y)}{\partial x} + j\frac{\partial V(x,y)}{\partial x} \right)\mathrm{d}x \nonumber\\
		& \qquad + \left( \frac{\partial U(x,y)}{\partial y} + j\frac{\partial V(x,y)}{\partial y} \right)\mathrm{d}y \nonumber\\
	& = \frac{\partial F(x,y)}{\partial x}\mathrm{d}x + \frac{\partial F(x,y)}{\partial y}\mathrm{d}y 
	\label{totalderiv}
	\end{align}
	We define
	\begin{subequations}
	\begin{align}
	\mathrm{d}z & = \mathrm{d}x + j\mathrm{d}y \\
	\mathrm{d}z^* & = \mathrm{d}x - j\mathrm{d}y
	\end{align}
	\label{dz}
	\end{subequations}
	Then the two differentials $\mathrm{d}x$ and $\mathrm{d}y$ are solved for as
	\begin{subequations}
		\begin{align}
		\mathrm{d}x & = \frac{1}{2}\left( \mathrm{d}z + \mathrm{d}z^* \right) \\
		\mathrm{d}y & = \frac{j}{2}\left( \mathrm{d}z^* - \mathrm{d}z \right)
		\end{align}
		\label{dxy}
	\end{subequations}
	Substituting (\ref{dxy}) into (\ref{totalderiv}) and rearranging terms gives
	\begin{align}
	\mathrm{d}F &= \frac{1}{2}\frac{\partial F}{\partial x}(\mathrm{d}z + \mathrm{d}z^*) + \frac{j}{2}\frac{\partial F}{\partial y}(\mathrm{d}z^* - \mathrm{d}z) \nonumber\\
	&= \frac{1}{2} \left( \frac{\partial F}{\partial x} - j\frac{\partial F}{\partial y} \right) \mathrm{d}z + \frac{1}{2}\left( \frac{\partial F}{\partial x} + j\frac{\partial F}{\partial y} \right) \mathrm{d}z^*
	\end{align}
	Motivated by the above formulation, we introduce the `complex partial differential' operators as
	\begin{subequations}
		\begin{align}
		\frac{\partial}{\partial z} & = \frac{1}{2} \left( \frac{\partial}{\partial x} - j\frac{\partial}{\partial y} \right) \\
		\frac{\partial}{\partial z^*} & = \frac{1}{2} \left( \frac{\partial}{\partial x} + j\frac{\partial}{\partial y} \right)
		\end{align}
	\end{subequations}
	Based on the definition, it is easy to verify that the differential operators of the conjugate function $f^*(z)$ satisfy
	\begin{subequations}
		\begin{align}
		\frac{\partial f^*(z)}{\partial z} & = \left( \frac{\partial f(z)}{\partial z^*} \right)^* \\
		\frac{\partial f^*(z)}{\partial z^*} & = \left( \frac{\partial f(z)}{\partial z} \right)^*
		\end{align}
	\end{subequations}
	With the above definitions, the differential $\mathrm{d}f$ can be defined as
	\begin{equation}
	\mathrm{d}f = \frac{\partial f(z)}{\partial z} \mathrm{d}z + \frac{\partial f(z)}{\partial z^*} \mathrm{d}z^*
	\label{diff}
	\end{equation}
	
	\begin{rmk}
	Notice that (\ref{diff}) is defined formally. However, from a geometrical point of view \cite{Lee1}, $\mathrm{d}f$ is a complex-valued differential one-form on $\mathbb{C}$. That is, it is an $\mathbb{R}$-linear operator at $z$ from the tangent space $T_{z}\mathbb{C} \cong \mathbb{C}$ to $\mathbb{C}$. With $z:\mathbb{C} \to \mathbb{C}$ and $z^*: \mathbb{C} \to \mathbb{C}$ as identity and complex conjugate functions respectively, $\mathrm{d}z$ and $\mathrm{d}z^*$ are also one-forms and they form a basis for the complexified cotangent space at every point $z$. The operators $\frac{\partial}{\partial z}$ and $\frac{\partial}{\partial z^*}$ are vectors on the complexified tangent space at every point $z$ and they form a basis which is dual to the basis $\{ \mathrm{d}z, \mathrm{d}z^* \}$. For instance,
	\begin{align}
	\mathrm{d}z\left(\frac{\partial}{\partial z}\right) &= \frac{1}{2}(\mathrm{d}x + j\mathrm{d}y) \left( \frac{\partial}{\partial x} - j\frac{\partial}{\partial y} \right) \nonumber\\
	& = \frac{1}{2}\left( \mathrm{d}x\frac{\partial}{\partial x} + \mathrm{d}y\frac{\partial}{\partial y} \right) \nonumber\\
	& = 1
	\end{align}
	
	The various operators can also be derived by noting the isomorphism between the real vector space $\mathbb{R}^2$ and the complex vector space $\mathbb{C}$ \cite{Huybrechts1}.
	\end{rmk}
	
	\subsection{Application to Power Flow Analysis}
	
	Given a power distribution system with $n+1$ buses, in which there are $n$ $PQ$ buses and one slack bus. we may define the complex power flow Jacobian as
	\begin{equation}
	J^{\mathcal{Z}} = 
	\begin{bmatrix}
	\partial S / \partial I & \partial S / \partial I^* \\
	\partial S^* / \partial I & \partial S^* / \partial I^*
	\end{bmatrix}
	\label{comp_jacob}
	\end{equation}
	Notice that the matrix is complex and the dimension of the matrix is $2n \times 2n$, i.e., $J^{\mathcal{Z}} \in \mathbb{C}^{2n\times 2n}$. We have the following equation based on definitions of the differentials
	\begin{equation}
	\begin{bmatrix}
	\mathrm{d}S \\
	\mathrm{d}S^*
	\end{bmatrix} =
	J^{\mathcal{Z}}
	\begin{bmatrix}
	\mathrm{d}I \\
	\mathrm{d}I^*
	\end{bmatrix}
	\label{complexpf}
	\end{equation}
	
	The next theorem shows that the determinant of the new Jacobian matrix defined in (\ref{comp_jacob}) and the original one given in (\ref{jacob}) are identical since they are the same linear operator under two different bases.
	
	\begin{thm}
		Given a power system with $n$ PQ buses and one slack bus, the determinant of the complex power flow Jacobian (\ref{comp_jacob}) and that of the Jacobian (\ref{jacob}) are identical, i.e., $\det J^{\mathcal{Z}} = \det J^{\mathcal{R}}$.
	\label{equaljacob}
	\end{thm}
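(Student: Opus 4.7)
The plan is to recognize $J^{\mathcal{Z}}$ and $J^{\mathcal{R}}$ as two matrix representations of the very same $\mathbb{R}$-linear differential map, expressed in different bases of the (complexified) tangent and cotangent spaces at the current operating point, so that equality of determinants follows from similarity.

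First I would introduce the $2n \times 2n$ change-of-basis block matrix
\begin{equation*}
T = \begin{bmatrix} \mathbf{I}_n & j\mathbf{I}_n \\ \mathbf{I}_n & -j\mathbf{I}_n \end{bmatrix}.
\end{equation*}
Using $\mathrm{d}I = \mathrm{d}I_D + j\,\mathrm{d}I_Q$ and $\mathrm{d}I^* = \mathrm{d}I_D - j\,\mathrm{d}I_Q$, the stacked input differentials satisfy
\begin{equation*}
\begin{bmatrix} \mathrm{d}I \\ \mathrm{d}I^* \end{bmatrix} = T \begin{bmatrix} \mathrm{d}I_D \\ \mathrm{d}I_Q \end{bmatrix},
\end{equation*}
and the formally identical relations $\mathrm{d}S = \mathrm{d}P + j\,\mathrm{d}Q$, $\mathrm{d}S^* = \mathrm{d}P - j\,\mathrm{d}Q$ show that the same $T$ converts the output differentials in the same way. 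Because the blocks of $T$ are all scalar multiples of $\mathbf{I}_n$ and therefore commute, the block-matrix determinant formula gives $\det T = (-2j)^n \neq 0$, so $T$ is invertible.

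Next I would combine these basis changes with the defining identities of the two Jacobians. By the definition of $J^{\mathcal{R}}$ in (\ref{jacob}), the real Jacobian carries $(\mathrm{d}I_D,\mathrm{d}I_Q)$ to $(\mathrm{d}P,\mathrm{d}Q)$, while (\ref{diff}) applied componentwise to the entries of $S$ and $S^*$ is precisely (\ref{complexpf}). Applying the output-side change of basis to the real Jacobian identity, and then substituting the input-side change of basis on the right, yields $T J^{\mathcal{R}} x = J^{\mathcal{Z}} T x$ for every real increment $x = [\mathrm{d}I_D^\top\ \mathrm{d}I_Q^\top]^\top$. Since $x$ is arbitrary, $J^{\mathcal{Z}} = T J^{\mathcal{R}} T^{-1}$, and taking determinants finishes the proof.

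The argument is essentially linear-algebraic bookkeeping, so there is no serious obstacle. The two points requiring some care are (i) treating $I$ and $I^*$ as independent coordinates in the Wirtinger sense, which is what makes (\ref{complexpf}) and the definition of $J^{\mathcal{Z}}$ well-posed in the first place, and (ii) verifying that one and the same matrix $T$ implements the basis change on both the domain and the codomain, which it does because the input pair $(\mathrm{d}I_D,\mathrm{d}I_Q)$ and output pair $(\mathrm{d}P,\mathrm{d}Q)$ are packaged into complex variables through the same rectangular-to-complex convention.
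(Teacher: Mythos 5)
Your proposal is correct and follows essentially the same route as the paper: both establish that $J^{\mathcal{Z}}$ and $J^{\mathcal{R}}$ are similar via the block change-of-basis matrix linking $(\mathrm{d}I_D,\mathrm{d}I_Q)$ with $(\mathrm{d}I,\mathrm{d}I^*)$ and $(\mathrm{d}P,\mathrm{d}Q)$ with $(\mathrm{d}S,\mathrm{d}S^*)$, and then take determinants. Your matrix $T$ is exactly the inverse of the paper's $T$ (the paper writes $J^{\mathcal{R}} = TJ^{\mathcal{Z}}T^{-1}$ while you write $J^{\mathcal{Z}} = TJ^{\mathcal{R}}T^{-1}$), which is an immaterial difference of convention.
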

	
	\begin{proof}
		Define the matrix $T$ as
		\begin{equation}
		T = \begin{bmatrix}
		\frac{1}{2}I & \frac{1}{2}I \\
		-\frac{j}{2}I & \frac{j}{2}I
		\end{bmatrix}
		\end{equation}
		where $I$ is an $n\times n$ identity matrix.
		
		It is known from section \ref{wirtinger} that for a PQ bus $i$,
		\begin{subequations}
			\begin{align}
				\mathrm{d}P_i & = \frac{1}{2}\left( \mathrm{d}S_i + \mathrm{d}S^*_i \right) \\
				\mathrm{d}Q_i & = -\frac{j}{2}\left( \mathrm{d}S_i - \mathrm{d}S^*_i \right)
			\end{align}
		\end{subequations}
		from which we have
		\begin{equation}
		\begin{bmatrix}
		\mathrm{d}P \\
		\mathrm{d}Q
		\end{bmatrix} =
		T
		\begin{bmatrix}
		\mathrm{d}S \\
		\mathrm{d}S^*
		\end{bmatrix}
		\label{transform1}
		\end{equation}
		Similarly,
		\begin{equation}
		\begin{bmatrix}
		\mathrm{d}I_D \\
		\mathrm{d}I_Q
		\end{bmatrix} =
		T
		\begin{bmatrix}
		\mathrm{d}I \\
		\mathrm{d}I^*
		\end{bmatrix}
		\label{transform2}
		\end{equation}
		Substituting (\ref{transform1})--(\ref{transform2}) into (\ref{complexpf}) and rearranging terms gives
		\begin{equation}
		\begin{bmatrix}
		\mathrm{d}P \\
		\mathrm{d}Q
		\end{bmatrix} =
		T J^{\mathcal{Z}} T^{-1}
		\begin{bmatrix}
		\mathrm{d}I_D \\
		\mathrm{d}I_Q
		\end{bmatrix}
		\end{equation}
		We notice that the matrix $TJ^{\mathcal{Z}}T^{-1}$ is simply the Jacobian matrix defined in (\ref{jacob}). That is, the two matrices are similar as
		\begin{equation}
		J^{\mathcal{R}} = TJ^{\mathcal{Z}}T^{-1}
		\end{equation}
		Based on the result from matrix analysis we have
		\begin{equation}
		\det J^{\mathcal{R}} = \det (TJ^{\mathcal{Z}}T^{-1}) = \det J^{\mathcal{Z}}.
		\end{equation}
	\end{proof}
	
	With Theorem \ref{equaljacob}, the voltage stability of a power system can now be examined by checking the singularity of the complex Jacobian matrix (\ref{comp_jacob}). To explore its properties, we write the submatrices of (\ref{comp_jacob}) explicitly as
	\begin{subequations}
		\begin{align}
			\frac{\partial S}{\partial I} & = 
			\begin{bmatrix}
				-Z_{11}I_1^* & -Z_{12}I_1^* & \cdots & -Z_{1n}I_1^* \\
				-Z_{21}I_2^* & -Z_{22}I_2^* & \cdots & \vdots \\
				\vdots 		 & \vdots 		& \ddots & \vdots \\
				-Z_{n1}I_n^* & \cdots 		& \cdots & -Z_{nn}I_n^*
			\end{bmatrix} \label{dSdI} \\	
			\frac{\partial S}{\partial I^*} & = 
			\begin{bmatrix}
			E_1 - \sum\limits_{j=1}^nZ_{1j}I_j 	& 0 		& \cdots & 0 \\
			\vdots 									& \ddots 	&  		 & \vdots \\
			\vdots 		 							&  			& \ddots & \vdots \\
			0 			& \cdots 	& \cdots & E_n - \sum\limits_{j=1}^nZ_{nj}I_j
			\end{bmatrix} \label{dSdIstar}
		\end{align}
	\end{subequations}
	and the submatrices $\partial S^* / \partial I$ and $\partial S^* / \partial I^*$ are element-wise complex conjugates of $\partial S / \partial I^*$ and $\partial S / \partial I$, respectively.
	
	It is noted that both $\partial S / \partial I^*$ and $\partial S^* / \partial I$ are diagonal matrices, and the diagonal element of the $i$th row of $\partial S / \partial I^*$ is the voltage phasor at bus $i$. To prove that (\ref{cond}) is indeed the necessary condition for voltage instability, we define a new matrix $J^{\mathcal{Z'}}$, whose diagonal elements are bus voltage phasors and the sum of the off-diagonal elements are equivalent voltage drops between equivalent voltage sources $K_iE$ and the bus voltages. It is necessary to show that the determinant of the new matrix is related to $J^{\mathcal{Z}}$. Then complex Levy--Desplanques theorem can be applied to prove the necessary condition (\ref{cond}).
	
	Note that interchanging the left block and right block of $J^{\mathcal{Z}}$ changes the sign of the determinant only when $n$ is odd since interchanging two columns of a matrix changes the sign of its determinant. Let the matrix after the block swapping be $J^{\mathcal{Z''}}$,
	\begin{equation}
	J^{\mathcal{Z''}} =  \begin{bmatrix}
	\partial S / \partial I^* & \partial S / \partial I \\
	\partial S^* / \partial I^* & \partial S^* / \partial I
	\end{bmatrix}
	\end{equation}
	and we have
	\begin{equation}
	\det \left(J^{\mathcal{Z''}}\right) = (-1)^n \det \left( \begin{bmatrix}
	\partial S / \partial I & \partial S / \partial I^* \\
	\partial S^* / \partial I & \partial S^* / \partial I^*
	\end{bmatrix} \right)
	\end{equation}
	
	Define the matrix $J^{\mathcal{Z'}}$ by replacing $\partial S / \partial I^*$ and $\partial S^* / \partial I$ by $B$ and $C$ of the same size as
	\begin{equation}
	J^{\mathcal{Z'}} =  \begin{bmatrix}
	\partial S / \partial I^* & B \\
	C 						  & \partial S^* / \partial I
	\end{bmatrix}
	\label{Jzp}
	\end{equation}
	where the matrices $B$ and $C$ are
	\begin{subequations}
		\begin{align}
			B & = 
			\begin{bmatrix}
			-Z_{11}I_1 & -Z_{12}I_2 	& \cdots & -Z_{1n}I_n \\
			-Z_{21}I_1 & -Z_{22}I_2 	& \cdots & \vdots \\
			\vdots 	   & \vdots 		& \ddots & \vdots \\
			-Z_{n1}I_1 & \cdots 		& \cdots & -Z_{nn}I_n
			\end{bmatrix} \label{mat_b} \\	
			C & = B^*
		\end{align}
	\end{subequations}
	
	The next lemma shows that the determinant of $J^{\mathcal{Z'}}$ is equal to that of $J^{\mathcal{Z''}}$, whose absolute value is equal to the determinant of $J^{\mathcal{Z}}$.
	
	\begin{lemma}
		$\det J^{\mathcal{Z}} = (-1)^n \det J^{\mathcal{Z'}} $.
		\label{lemma2}
	\end{lemma}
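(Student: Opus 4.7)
The plan is to exhibit invertible diagonal matrices $D_1$ and $D_2$ with $\det D_1 \det D_2 = 1$ such that $J^{\mathcal{Z'}} = D_1 J^{\mathcal{Z''}} D_2$, which immediately gives $\det J^{\mathcal{Z'}} = \det J^{\mathcal{Z''}}$; combining with the observation $\det J^{\mathcal{Z''}} = (-1)^n \det J^{\mathcal{Z}}$ recorded just before the lemma then yields the claimed identity (since $(-1)^{2n} = 1$).

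First I would compare $J^{\mathcal{Z''}}$ and $J^{\mathcal{Z'}}$ block by block. Their diagonal blocks $\diag(V)$ and $\diag(V^*)$ coincide. The off-diagonal blocks of $J^{\mathcal{Z''}}$, namely $\partial S/\partial I = \diag(I^*)(-Z)$ and $\partial S^*/\partial I^* = \diag(I)(-Z^*)$, have their row $i$ scaled by $I_i^*$ and $I_i$ respectively, whereas $B = (-Z)\diag(I)$ and $C = (-Z^*)\diag(I^*)$ have their column $j$ scaled by $I_j$ and $I_j^*$. The task therefore reduces to moving the current scaling in the off-diagonal blocks from the row index to the column index, without disturbing the diagonal blocks.

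This suggests the candidates
\[
D_1 = \begin{bmatrix} \diag(1/I^*) & 0 \\ 0 & \diag(1/I) \end{bmatrix}, \qquad D_2 = \begin{bmatrix} \diag(I^*) & 0 \\ 0 & \diag(I) \end{bmatrix}.
\]
Left-multiplication by $D_1$ strips the row scaling from the off-diagonal blocks (turning them into $-Z$ and $-Z^*$) while also dividing the $i$th diagonal entries of the upper-left and lower-right blocks by $I_i^*$ and $I_i$; right-multiplication by $D_2$ then inserts the desired column scaling in the off-diagonal blocks to produce $B$ and $C$, while simultaneously restoring the diagonal blocks to $\diag(V)$ and $\diag(V^*)$. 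A routine entrywise check of the four resulting blocks against $J^{\mathcal{Z'}}$ confirms the factorization, and $\det D_1 \det D_2 = \prod_i 1/(I_i^* I_i) \cdot \prod_i I_i^* I_i = 1$.

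The one nuisance is that $D_1$ requires $I_i \neq 0$ for every PQ/DG bus. I would dispose of this by a continuity argument: both $\det J^{\mathcal{Z}}$ and $\det J^{\mathcal{Z'}}$ are polynomial in the entries of $I$, so an identity verified on the dense open set $\{ I : I_i \neq 0 \text{ for all } i \}$ extends by continuity to all operating points. Beyond this mild caveat, the proof is mechanical rather than conceptual, so I do not anticipate substantive obstacles.
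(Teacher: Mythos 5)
Your factorization is the paper's own argument written in similarity form: with $N = \diag(I_1^*,\ldots,I_n^*,I_1,\ldots,I_n)$ (your $D_2$, with $D_1 = N^{-1}$), the paper sets $M = N^{-1}J^{\mathcal{Z''}}$ (exactly your $D_1 J^{\mathcal{Z''}}$, with diagonal blocks $\diag(V_i/I_i^*)$, $\diag(V_i^*/I_i)$ and off-diagonal blocks $-Z$, $-Z^*$) and concludes from $J^{\mathcal{Z'}} = MN$ and $J^{\mathcal{Z''}} = NM$ that the two determinants agree. The approach is essentially identical; your explicit continuity argument for buses with $I_i = 0$ is a small refinement that the paper's proof (whose $M$ also divides by $I_i^*$) silently needs but does not supply.
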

	\begin{proof}
		Define a $2n \times 2n$ complex-valued matrix $M$ as
		\begin{equation}
		M = 
		\begin{bmatrix}
		M_{11} & M_{12} \\
		M_{21} & M_{22}
		\end{bmatrix}
		\end{equation}
		where the four $n \times n$ blocks are
		\begin{subequations}
			\begin{align}
			M_{11} & =
			\begin{bmatrix}
			V_1 / I_1^*	& 0 			& \cdots 	& 0 \\
			\vdots 		& V_2 / I_2^* 	&  		 	& \vdots \\
			\vdots 		&  				& \ddots 	& \vdots \\
			0 			& \cdots 		& \cdots 	& V_n / I_n^*
			\end{bmatrix} \\
			M_{12} & = 
			\begin{bmatrix}
			-Z_{11} 	& -Z_{12} 		& \cdots 	& -Z_{1n} \\
			-Z_{21} 	& -Z_{22} 		& \cdots 	& \vdots \\
			\vdots 		& \vdots 		& \ddots 	& \vdots \\
			-Z_{n1} 	& \cdots 		& \cdots 	& -Z_{nn}
			\end{bmatrix} \\
			M_{21} & = M_{12}^* \\
			M_{22} & = M_{11}^*
			\end{align}
		\end{subequations}
		where $V_i = K_iE - \sum\limits_{j=1}^nZ_{ij}I_j$ is the bus $i$ voltage.
		
		In addition, define the $2n \times 2n$ complex-valued diagonal matrix $N$ as
		\begin{equation}
		N = 
		\begin{bmatrix}
		I_1^*	& 0			&  			& \cdots	& 			& 0 	 \\
		0		& \ddots	&  			& 			& 			& 0		 \\
				& 			& I_n^*		& 			& 0			& 		 \\
		\vdots	& 			& 			& I_1		& 			& \vdots \\
				& 			& 0			& 			& \ddots	& 		 \\
		0		& 0			& \cdots 	& 			& 			& I_n
		\end{bmatrix}
		\end{equation}
		
		Then we can see that
		\begin{subequations}
			\begin{align}
				J^{\mathcal{Z'}}  & = MN \\
				J^{\mathcal{Z''}} & = NM
			\end{align}
		\end{subequations}
		Therefore,
		\begin{equation}
		\det J^{\mathcal{Z'}} = \det(M)\det(N) = \det J^{\mathcal{Z''}}
		\end{equation}
		Since 
		\begin{equation}
		\det J^{\mathcal{Z}} = (-1)^n \det J^{\mathcal{Z''}}
		\end{equation}
		We arrive at the conclusion that 
		\begin{equation}
		\det J^{\mathcal{Z}} = (-1)^n \det J^{\mathcal{Z'}}
		\end{equation}
	\end{proof}
	
	Now the necessary condition (\ref{cond}) can be easily seen by applying complex Levy--Desplanques theorem on the matrix $J^{\mathcal{Z'}}$. We state the fact as:
	\begin{thm}
		For the $(n+1)$-bus power system described in Section \ref{modelling}, a power injection is at the power flow solvability boundary only when the power flow solution satisfies
		\begin{equation}
			\exists h \in \{\mathcal{L}, \mathcal{G} \} \text{ such that } |V_h| \leq \sum_{i = 1}^n |Z_{hi}I_i|
		\end{equation}
	\end{thm}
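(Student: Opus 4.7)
The plan is to combine the two determinant identities already proved (Theorem~\ref{equaljacob} and Lemma~\ref{lemma2}) with the complex Levy--Desplanques theorem (a strictly diagonally dominant complex matrix is nonsingular) applied to $J^{\mathcal{Z'}}$, and then take the contrapositive. Since reaching the power-flow solvability boundary via saddle-node bifurcation is equivalent to $\det J^{\mathcal{R}} = 0$, and $|\det J^{\mathcal{R}}| = |\det J^{\mathcal{Z}}| = |\det J^{\mathcal{Z'}}|$ by the preceding results, the statement reduces to showing that if $|V_h| > \sum_{i=1}^n |Z_{hi}I_i|$ for every bus $h$, then $J^{\mathcal{Z'}}$ is nonsingular.

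Next, I would read off the row sums of $J^{\mathcal{Z'}}$ from the explicit block form in (\ref{Jzp}), (\ref{dSdIstar}) and (\ref{mat_b}). For row $h$ in the upper half, the diagonal entry (at column $h$) is $V_h = E_h - \sum_j Z_{hj}I_j$ because $\partial S/\partial I^*$ is diagonal; every other entry in the upper-left block is zero, and the entries contributed by the upper-right block $B$ are $-Z_{h1}I_1,\ldots,-Z_{hn}I_n$ sitting at columns $n+1,\ldots,2n$. Crucially, the entry $-Z_{hh}I_h$ from $B$ lives at column $n+h$, not at column $h$, so it counts as off-diagonal. The off-diagonal absolute row sum of row $h$ therefore collapses exactly to $\sum_{i=1}^n |Z_{hi}I_i|$. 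The lower $n$ rows, obtained from the upper ones by entry-wise complex conjugation ($M_{21}=M_{12}^*$, $M_{22}=M_{11}^*$), yield the identical condition $|V_h^*| > \sum_i |Z_{hi}^*I_i^*|$, so the diagonal-dominance test needs to be checked on only $n$ rows.

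Finally, I would invoke the complex Levy--Desplanques theorem: if $|V_h| > \sum_{i=1}^n |Z_{hi}I_i|$ holds for every $h\in\{1,\ldots,n\}$, then $J^{\mathcal{Z'}}$ is strictly diagonally dominant and hence nonsingular, so $\det J^{\mathcal{R}}\neq 0$ and the operating point lies strictly inside the solvability region. Contrapositively, any point on the solvability boundary must violate at least one of these inequalities, which is exactly the claimed necessary condition. I do not expect a serious analytical obstacle---the genuine work has been front-loaded into Theorem~\ref{equaljacob} and Lemma~\ref{lemma2}---but the one step that requires genuine care is correctly identifying which entry of $J^{\mathcal{Z'}}$ counts as ``diagonal'' within the full $2n\times 2n$ matrix: the $-Z_{hh}I_h$ term from $B$ looks diagonal within its own block but is off-diagonal in $J^{\mathcal{Z'}}$, and the entire argument collapses if this book-keeping is mishandled.
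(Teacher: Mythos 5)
Your proposal is correct and follows essentially the same route as the paper: reduce to $J^{\mathcal{Z'}}$ via Theorem~\ref{equaljacob} and Lemma~\ref{lemma2}, then apply the complex Levy--Desplanques theorem and take the contrapositive. The only difference is that you explicitly verify the row-sum bookkeeping (in particular that $-Z_{hh}I_h$ sits at column $n+h$ and is therefore off-diagonal in the $2n\times 2n$ matrix), which the paper leaves implicit in the phrase ``direct consequence''; your accounting is right.
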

	\begin{proof}
		With Theorem \ref{equaljacob} and Lemma \ref{lemma2}, and the continuity of power flow equations, we only need to show the matrix $J^{\mathcal{Z'}}$ is non-singular when $|V_h| >  \sum_{i = 1}^n |Z_{hi}I_i|, \ h \in \{\mathcal{L}, \mathcal{G} \}$. This is a direct consequence of complex Levy--Desplanques theorem, which states that strictly diagonally dominant matrices are non-singular.
	\end{proof}
	
	\begin{rmk}
		\label{rmk2}
		The proposed condition provides a precursor for power flow insolvability by setting an operating condition-dependent upper bound in $(n+1)$-dimensional power injection space. Some fixed-point theorem-based solution existence conditions tend to be conservative. We will show that the upper bound provided by the proposed condition is always greater than the one given by the condition proposed in \cite{Bolognani2}, where the solvability condition is given by
		\begin{equation}
		|V_\mathcal{S}|^2 > 4\|W^{-1}Z (W^*)^{-1}\|^*\|S\|
		\label{Bolognanicond}
		\end{equation}
		where $W = \diag (-Y_{\mathcal{LL}}^{-1}Y_{\mathcal{LS}})$, $\|\cdot\|$ is the Euclidean norm on $\mathbb{C}^n$ and the matrix norm $\|\cdot\|^*$ on $\mathbb{C}^{n \times n}$ is defined as
		\begin{equation*}
		\|A\|^* \coloneqq \max_h \|A_{h\bullet}\| = \max_h \sqrt{\sum_k |A_{hk}|^2}
		\end{equation*}
		where the notation $A_{h\bullet}$ stands for the $h$th row of $A$.
		
		For all $h \in \{\mathcal{L}, \mathcal{G}\}$, 
		\begin{align}
		\sum_{j=1}^n \left| Z_{hj} \frac{S_j}{V_j} \right|
		& = |W_{hh}| \sum_{j=1}^n \left| W_{hh}^{-1} Z_{hj} (W_{jj}^*)^{-1} \left( \frac{S_j W^*_{jj}}{V_j} \right)\right| \nonumber\\
		& \leq |W_{hh}| \left\|\frac{W}{V}\right\|_{\infty} \sum_{j=1}^n \left| W_{hh}^{-1} Z_{hj} (W_{jj}^*)^{-1}S_j \right| \nonumber\\
		& \leq |W_{hh}| \left\|\frac{W}{V}\right\|_{\infty} \left\| \left(W^{-1}Z (W^*)^{-1}\right)_{h\bullet} \right\| \| S \| \nonumber\\
		& \leq |W_{hh}| \left\|\frac{W}{V}\right\|_{\infty} \left\| W^{-1}Z (W^*)^{-1} \right\|^* \| S \| \label{inequal}
		\end{align}
		where $W/V$ is the $n$-dimensional vector such that $(W/V)_i = W_{ii}/V_i$, the second inequality is due to Cauchy-Schwarz and the third from the definition of $\|\cdot\|^*$.
		
		The inequality (\ref{inequal}) suggests that the proposed condition (\ref{cond}) guarantees solvability when
		\begin{equation}
		\left\| W^{-1}Z (W^*)^{-1} \right\|^* \| S \| < \frac{1}{\left\| W/V \right\|_{\infty}^2},
		\label{cond_relax}
		\end{equation}
		since (\ref{inequal}) and (\ref{cond_relax}) lead to
		\begin{equation}
		\sum_{j=1}^n \left| Z_{hj} \frac{S_j}{V_j} \right| < \frac{|W_{hh}|}{\left\| W/V \right\|_{\infty}} \leq |V_h|.
		\label{suffcond}
		\end{equation}
		
		By comparing (\ref{Bolognanicond}) and (\ref{cond_relax}), it is concluded that the upper bound provided by the proposed condition is greater than (\ref{Bolognanicond}) if $1/\left\| W/V\right\|_{\infty} > |V_{\mathcal{S}}|/2$. We claim that this is the only relevant case since solvability conditions defined in (\ref{Bolognanicond}) and (\ref{cond_relax}) are both violated otherwise. This is made clear by the following proposition:
		\begin{prop}
		Assuming a stable high-voltage solution exists for power injection $S$, then $\left\| W^{-1}Z (W^*)^{-1} \right\|^* \| S \| \geq |V_{\mathcal{S}}|^2/4$ when $|V_{\mathcal{S}}| / 2 \geq 1/\left\| W/V\right\|_{\infty}$.
		\end{prop}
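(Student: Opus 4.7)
The plan is to argue by contrapositive: assume Bolognani's condition holds, that is $\|A\|^*\|S\| < |V_{\mathcal{S}}|^2/4$ with $A \coloneqq W^{-1}Z(W^*)^{-1}$, and show that this forces the stable high-voltage solution to satisfy $\min_i|V_i/W_{ii}| > |V_{\mathcal{S}}|/2$, which contradicts the hypothesis $|V_{\mathcal{S}}|/2 \geq 1/\|W/V\|_{\infty}$.

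First, I would recast (\ref{multiport}) together with the PQ-bus relation $I_i = S_i^*/V_i^*$ into a fixed-point equation for the rescaled variable $u_i \coloneqq V_i/W_{ii}$, noting that $W_{ii}V_{\mathcal{S}} = E_i$ by definition of $W$. Dividing the $i$th row of (\ref{multiport}) by $W_{ii}$ and substituting yields
\begin{equation*}
u_i = V_{\mathcal{S}} - \sum_{j=1}^n A_{ij}\,\frac{S_j^*}{u_j^*}, \qquad i = 1,\ldots,n.
\end{equation*}
By construction $\|W/V\|_\infty = 1/\min_i|u_i|$, so setting $m \coloneqq \min_i|u_i|$ the hypothesis of the proposition reads simply $m \leq |V_{\mathcal{S}}|/2$.

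Next, I would evaluate the fixed-point equation at the index $i^*$ attaining the minimum, take absolute values, and apply the triangle inequality followed by Cauchy--Schwarz to the resulting sum:
\begin{equation*}
|V_{\mathcal{S}}| \leq m + \frac{1}{m}\sum_{j=1}^n |A_{i^*j}|\,|S_j| \leq m + \frac{\|A_{i^*\bullet}\|\,\|S\|}{m} \leq m + \frac{\|A\|^*\,\|S\|}{m}.
\end{equation*}
Multiplying through by $m>0$ and rearranging gives the quadratic-in-$m$ inequality $m^2 - |V_{\mathcal{S}}|\,m + \|A\|^*\|S\| \geq 0$.

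The last step is to extract the desired lower bound on $m$. If $\|A\|^*\|S\| < |V_{\mathcal{S}}|^2/4$, the quadratic has two real distinct roots $m_{\pm} = (|V_{\mathcal{S}}| \pm \sqrt{|V_{\mathcal{S}}|^2 - 4\|A\|^*\|S\|})/2$ straddling $|V_{\mathcal{S}}|/2$, and the feasible set for $m$ splits as $[0,m_-]\cup[m_+,\infty)$. The main obstacle is ruling out the lower branch. I would invoke a continuity/homotopy argument, tracing the stable high-voltage solution along a path in the loading parameter from $S=0$, where $u = V_{\mathcal{S}}\mathbf{1}$ and $m = |V_{\mathcal{S}}|$ lies in the upper interval $[m_+,\infty)$. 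Because the discriminant $|V_{\mathcal{S}}|^2 - 4\|A\|^*\|S\|$ stays strictly positive along this path, the two intervals never merge and continuity prevents $m$ from crossing the forbidden gap; hence $m \geq m_+ > |V_{\mathcal{S}}|/2$ for the stable high-voltage branch, contradicting $m \leq |V_{\mathcal{S}}|/2$ and completing the proof.
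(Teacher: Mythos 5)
Your proof is correct and follows essentially the same route as the paper's: both rest on the triangle-inequality/Cauchy--Schwarz bound applied to the fixed-point form of the power flow equation (the paper's inequality (\ref{inequal})) combined with a continuity argument tracing the high-voltage branch from the zero-load solution. The only cosmetic difference is that you package the bound as a quadratic inequality in $m = 1/\left\| W/V \right\|_{\infty}$ and exclude the gap between its roots, whereas the paper evaluates the same bound directly at the first crossing point $1/\left\| W/V' \right\|_{\infty} = |V_{\mathcal{S}}|/2$ located by continuity.
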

		\begin{proof}
		We may assume there exists a power injection vector $S$ and corresponding voltage profile $V$ such that
		\begin{equation}
		\left\| W^{-1}Z (W^*)^{-1} \right\|^* \| S \| < \frac{|V_{\mathcal{S}}|^2}{4}
		\label{hypo}
		\end{equation}
		and
		\begin{equation}
		\frac{|V_{\mathcal{S}}|}{2} \geq \frac{1}{\left\| W/V \right\|_{\infty}}
		\end{equation}
		
		The system has a zero power injection solution where bus voltages $V_0$ are close to $E$ and $1/\left\| W/V_0\right\|_{\infty} > |V_{\mathcal{S}}|/2$ when shunt elements are not extraordinarily large \cite{Molzahn1}. Then, by continuity, there exists a real number $0 < \mu < 1$ such that the voltage profile $V'$ when power injection is $\mu S$ satisfies
		\begin{equation}
		\frac{|V_{\mathcal{S}}|}{2} = \frac{1}{\left\| W/V' \right\|_{\infty}}
		\end{equation}
		while (\ref{inequal}) requires
		\begin{equation}
		\left\| W^{-1}Z (W^*)^{-1} \right\|^* \| \mu S \| \geq \frac{1}{\left\| W/V' \right\|_{\infty}^2}
		\end{equation}
		which contradicts the assumption (\ref{hypo}).
		\end{proof}
	\end{rmk}
	
	\subsection{Influence of System Parameter Perturbations on $C$-index}
	Now we explore how different system parameters influence the $C$-index through sensitivity analysis on a linearized power flow model. Specifically, we consider the sensitivity of line impedance and load power factors. To simplify the argument, we assume throughout the subsection that the distribution system is composed of a slack bus and $n$ PQ buses with inductive loads.
	
	First, we consider the impact of the homogeneous change in line impedance. Assume that for each line and each shunt capacitance the impedance is changed such that $Y_{ij}'/Y_{ij} =a$ for all entries of the admittance matrix, where $Y_{ij}'$ is the $ij$-entry of the new admittance matrix and $a$ is a real number between 0 and 1. Consequently, the new impedance matrix $Z'$ is $Z_{ij}' =Z_{ij}/a$. This change is equivalent to extending the length of each line by multiple of $1/a$. It is expected that the critical loading factor under the new condition will decrease as line losses increase with the increase of the line length. We note that with the change of line impedance, the voltage profile changes under the same loading condition. To this end we propose to apply a linearized power flow approximation which has been validated for distribution system analysis, to derive an approximate voltage solution with new impedance matrix given the loading condition. The following linearized power flow from \cite{Bolognani2} is used: 
	\begin{equation}
	V_j' = V_{\mathcal{S}}\left( 1 + \frac{1}{|V_{\mathcal{S}}|^2}\sum_{i=1}^n Z_{ji}'S_i^* \right), \qquad \forall j \in \mathcal{L}.
	\end{equation}
	where $V_j'$ is the approximate bus voltage at bus $j$ based on the new impedance matrix $Z'$. We assume that increasing load real and reactive power injections causes decrease in PQ bus voltages, which is a condition commonly used for the characterization of stable systems \cite{Taylor1}. Based on the linearized power flow equation, this can be expressed as 
	\begin{align}
	\left| 1 + \frac{1}{|V_{\mathcal{S}}|^2}\sum_{i=1}^n Z_{ji}'(bS_i^*) \right| < 	\left| 1 + \frac{1}{|V_{\mathcal{S}}|^2}\sum_{i=1}^n Z_{ji}'S_i^* \right|, \nonumber \\
		\quad b > 1, \forall j \in \mathcal{L}.
	\end{align}
	We immediately notice that increasing entries in the impedance matrix has exactly the same effects, so we have
	\begin{equation}
	|V_j'| < |V_j|, \qquad j \in \mathcal{L}.
	\end{equation} 
	Now that we have analyzed the impact of line impedance increase on the matrix $Z$ and bus voltage $V$, we can conclude that the $C$-index decreases with a homogeneous increase of line impedance as
	\begin{equation}
	C_j' = \frac{V_j'}{\sum_{i=1}^n \left| Z_{ji}'S_i/V_i' \right|} < C_j, \qquad \forall j \in \mathcal{L}.
	\end{equation}

	Next we consider the impact of load power factors on $C$-index. For simplicity, we again assume a homogeneous power factor variation such that the power factors of all buses $j \in \mathcal{L}$ decrease while the magnitudes of the apparent power remain constant. The change in load power factors affects the $C$-index through the change of load voltage profiles. Specifically, the angle $\angle \left( \sum_{i=1}^n Z_{ji}{S'}_i^* \right)$ for bus $j$ lies between $-180^{\circ}$ and $0^{\circ}$, and the decrease of power factors results in the decrease of $\angle \left( \sum_{i=1}^n Z_{ji}{S'}^* \right)$. Since the magnitudes of load power injections are constant, we have $\left| \sum_{i=1}^n Z_{ji}{S'}_i^* \right| = \left| \sum_{i=1}^n Z_{ji}S_i^* \right|$. As a result, the load voltage magnitude drops ($|V_j'| < |V_j|$) for all load bus $j$. Therefore, the $C$-indices of all load buses decrease. Note that the result here aligns with the general engineering wisdom that power factor correction can potentially benefit the system voltage stability.

	We demonstrate through two illustrative examples the impact of system parameters on the proposed $C$-index. Both examples show that the index provides a correct quantitative indication of the system stress level. The impact of other system parameters can be analyzed in a similar way. In particular, the analyses of the impact of changes of individual line impedance and load power factor can be performed as well, but are omitted for brevity.
	
	\subsection{Generalization to Systems with Constant Current Buses}
	\label{consti}
	DG inverters may be operated in either constant power or constant current modes. Therefore, the model should be able to represent generators as constant power and/or constant current buses. Since constant-current DGs are modeled as linear elements in the paper and their currents are given, their inclusion in the model does not change the dimension of the Jacobian matrix (\ref{comp_jacob}). 
	
	For example, consider the previous $(n + 1)$-bus power system model with $n$ PQ buses where bus $n+1$ is the slack bus. We augment the system by adding a constant current generator as bus $n+2$ and evaluate the change in (\ref{comp_jacob}). Recall the Jacobian (\ref{comp_jacob}) is
	\begin{equation}
	J^{\mathcal{Z}} = 
	\begin{bmatrix}
	\partial S / \partial I & \partial S / \partial I^* \\
	\partial S^* / \partial I & \partial S^* / \partial I^*
	\end{bmatrix}
	\end{equation}
	Note that the vectors $I = [I_1, \ldots, I_n]^T$ and $S = [S_1, \ldots, S_n]^T$ do not include the constant current bus $n+2$ and that the dimensions of the four submatrices $\partial S/ \partial I$, $\partial S/ \partial I^*$, $\partial S^*/ \partial I$, and $\partial S^*/ \partial I^*$ of $J^{\mathcal{Z}}$ are still $n \times n$. The expression of the entries of the matrix (\ref{dSdI}) remains the same. However, the impedance matrix $Z$ changes as a result of the modification of the system topology. For the diagonal matrix (\ref{dSdIstar}), the diagonal entries are modified by including the current injection from the constant current generator so that the $i$th diagonal element changes from
	\begin{equation}
	E_i - \sum\limits_{j=1}^nZ_{ij}I_j
	\end{equation}
	to
	\begin{equation}
	\left(E_i - Z_{i,n+2}I_{n+2}\right) - \sum\limits_{j=1}^nZ_{ij}I_j
	\end{equation}
	Hence, introducing constant current sources to the system can be considered as varying the equivalent source voltage seen from a PQ bus $i$ from $E_i$ to $E_i - Z_{i,n+2}I_{n+2}$ from the perspective of the complex power flow Jacobian in (\ref{comp_jacob}). Therefore, all the analysis made with the assumption of PQ buses apply.
	
	In this work we have considered constant-power and constant-current DGs, an important extension is to consider DGs with voltage regulation capability. The incorporation of these DGs modifies the equivalent voltage source $E$ seen by PQ buses in a similar way as what has been done to include constant-current DGs. Strictly speaking, if the DGs are set to regulate real power outputs and voltage magnitudes, then $E$ is no longer fixed as the phase angles of the DGs are free to vary. However, it turns out that the assumption of constant $E$ is a reasonable and effective approximation which has been extensively validated in voltage stability-related studies \cite{Liu1, Kessel1, WangY1} and can be used to incorporate voltage-regulated DGs in our framework.
	
	\section{Simulations} 
	\label{simulations}
	
	We perform case studies on a test distribution system that has been used in \cite{Bolognani2}.  Details of the test system can be found in \cite{Bolognani3}. The topology of the system is shown in Fig. \ref{fig1}. In the simulations, the DG penetration level is defined as the ratio of total DG capacity to total peak apparent load power of all loads \cite{Hoke1}. 
	
	\begin{figure}[!t]
		\centering
		\begin{tikzpicture}
		\draw[-][draw, thick] (0,0) -- (-0.6,0);
		\draw[-][draw, thick] (0,0) -- (0.6,0);
		\draw[-][draw, thick] (0.6,0) -- (1.2,0);
		\draw[-][draw, thick] (1.2,0) -- (1.8,0);
		\draw[-][draw, thick] (1.8,0) -- (2.4,0);
		\draw[-][draw, thick] (2.4,0) -- (3,0);
		\draw[-][draw, thick] (3,0) -- (3.6,0);
		\draw[-][draw, thick] (3.6,0) -- (4.2,0);
		\draw[-][draw, thick] (4.2,0) -- (4.8,0);
		
		\draw[-][draw, thick] (1.8,0) -- (1.8,0.6);
		\draw[-][draw, thick] (1.8,0.6) -- (1.8,1.2);
		\draw[-][draw, thick] (1.8,1.2) -- (1.8,1.8);
		\draw[-][draw, thick] (1.8,1.8) -- (1.8,2.4);
		\draw[-][draw, thick] (1.8,2.4) -- (1.8,3);
		\draw[-][draw, thick] (1.8,3) -- (1.8,3.6);
		\draw[-][draw, thick] (1.8,3.6) -- (1.8,4.2);
		
		\draw[-][draw, thick] (1.8,0.6) -- (2.4,0.6);
		\draw[-][draw, thick] (2.4,0.6) -- (2.4,1.2);
		\draw[-][draw, thick] (2.4,1.2) -- (2.4,1.8);
		\draw[-][draw, thick] (2.4,1.8) -- (2.4,2.4);
		\draw[-][draw, thick] (2.4,2.4) -- (2.4,3.0);
		\draw[-][draw, thick] (2.4,3.0) -- (2.4,3.6);
		\draw[-][draw, thick] (2.4,3.0) -- (3.0,3.0);
		\draw[-][draw, thick] (3.0,3.0) -- (3.6,3.0);
		\draw[-][draw, thick] (3.6,3.0) -- (4.2,3.0);
		
		\draw[-][draw, thick] (3.6,0.0) -- (3.6,0.6);
		\draw[-][draw, thick] (3.6,0.6) -- (4.8,0.6);
		\draw[-][draw, thick] (4.8,0.6) -- (4.8,1.2);
		\draw[-][draw, thick] (4.8,1.2) -- (4.8,1.8);
		\draw[-][draw, thick] (4.8,1.8) -- (4.8,2.4);
		\draw[-][draw, thick] (4.8,2.4) -- (4.2,2.4);
		\draw[-][draw, thick] (4.2,2.4) -- (3.6,2.4);
		
		\draw[-][draw, thick] (4.8,0.6) -- (5.4,0.6);
		\draw[-][draw, thick] (5.4,0.6) -- (5.4,1.2);
		\draw[-][draw, thick] (5.4,1.2) -- (5.4,1.8);
		\draw[-][draw, thick] (5.4,1.8) -- (5.4,2.4);
		\draw[-][draw, thick] (5.4,2.4) -- (5.4,3.0);
		\draw[-][draw, thick] (5.4,1.2) -- (6.0,1.2);
		\draw[-][draw, thick] (6.0,1.2) -- (6.6,1.2);
		\draw[-][draw, thick] (6.6,1.2) -- (7.2,1.2);
		
		\draw[-][draw, thick] (5.4,0.6) --  (5.4,0.0);
		\draw[-][draw, thick] (5.4,0.0) --  (5.4,-0.6);
		\draw[-][draw, thick] (5.4,-0.6) -- (4.8,-0.6);
		\draw[-][draw, thick] (4.8,-0.6) -- (4.2,-0.6);
		\draw[-][draw, thick] (4.2,-0.6) -- (3.6,-0.6);
		\draw[-][draw, thick] (3.6,-0.6) -- (3.0,-0.6);
		\draw[-][draw, thick] (3.0,-0.6) -- (2.4,-0.6);
		\draw[-][draw, thick] (2.4,-0.6) -- (1.8,-0.6);
		\draw[-][draw, thick] (5.4,-0.6) -- (6.0,-0.6);
		\draw[-][draw, thick] (6.0,-0.6) -- (6.6,-0.6);
		\draw[-][draw, thick] (6.6,-0.6) -- (7.2,-0.6);
		\draw[-][draw, thick] (6.6,-0.6) -- (6.6,0.0);
		\draw[-][draw, thick] (6.6,0.0) --  (6.6,0.6);
		\draw[-][draw, thick] (6.6,0.6) --  (7.2,0.6);
		\draw[-][draw, thick] (7.2,0.6) --  (7.8,0.6);

		\draw[draw, very thick] (-0.6,-.3) -- (-0.6, .3);
		\node [below] at (-0.6,-0.3) {\small 56};
		\draw [fill] (0,0) circle [radius=0.05];
		\node [below] at (0,0) {\small 1};
		\draw [fill] (0.6,0) circle [radius=0.05];
		\node [below] at (0.6,0) {\small 2};
		\draw [fill] (1.2,0) circle [radius=0.05];
		\node [below] at (1.2,0) {\small 3};
		\draw [fill] (1.8,0) circle [radius=0.05];
		\node [below] at (1.8,0) {\small 4};
		\draw [fill] (2.4,0) circle [radius=0.05];
		\node [below] at (2.4,0) {\small 5};
		\draw [fill] (3,0) circle [radius=0.05];
		\node [below] at (3.0,0) {\small 6};
		\draw [fill] (3.6,0) circle [radius=0.05];
		\node [below] at (3.6,0) {\small 7};
		\draw [fill] (4.2,0) circle [radius=0.05];
		\node [below] at (4.2,0) {\small 8};
		\draw [fill=red,red] (4.73,-0.07) rectangle (4.87,0.07);
		\node [below] at (4.8,0) {\small 9};
		
		\draw [fill] (1.8,0.6) circle [radius=0.05];
		\node [left] at (1.8,0.6) {\small 40};
		\draw [fill] (1.8,1.2) circle [radius=0.05];
		\node [left] at (1.8,1.2) {\small 41};
		\draw [fill] (1.8,1.8) circle [radius=0.05];
		\node [left] at (1.8,1.8) {\small 42};
		\draw [fill=red, red] (1.73,2.33) rectangle (1.87,2.47);
		\node [left] at (1.8,2.4) {\small 43};
		\draw [fill] (1.8,3.0) circle [radius=0.05];
		\node [left] at (1.8,3.0) {\small 44};
		\draw [fill] (1.8,3.6) circle [radius=0.05];
		\node [left] at (1.8,3.6) {\small 45};
		\draw [fill] (1.8,4.2) circle [radius=0.05];
		\node [left] at (1.8,4.2) {\small 46};
		
		\draw [fill] (2.4,0.6) circle [radius=0.05];
		\node [right] at (2.4,0.6) {\small 47};
		\draw [fill=red,red] (2.33,1.13) rectangle (2.47, 1.27);
		\node [right] at (2.4,1.2) {\small 48};
		\draw [fill] (2.4,1.8) circle [radius=0.05];
		\node [right] at (2.4,1.8) {\small 49};
		\draw [fill] (2.4,2.4) circle [radius=0.05];
		\node [right] at (2.4,2.4) {\small 50};
		\draw [fill=red,red] (2.33,2.93) rectangle (2.47, 3.07);
		\node [below right] at (2.4,3) {\small 51};
		\draw [fill] (2.4,3.6) circle [radius=0.05];
		\node [above] at (2.4,3.6) {\small 52};
		\draw [fill] (3,3) circle [radius=0.05];
		\node [above] at (3.0,3.0) {\small 53};
		\draw [fill] (3.6,3.0) circle [radius=0.05];
		\node [above] at (3.6,3.0) {\small 54};
		\draw [fill] (4.2,3.0) circle [radius=0.05];
		\node [above] at (4.2,3.0) {\small 55};
		
		\draw [fill] (3.6,0.6) circle [radius=0.05];
		\node [above] at (3.6,0.6) {\small 10};
		\draw [fill] (4.8,0.6) circle [radius=0.05];
		\node [above left] at (4.8,0.6) {\small 11};
		\draw [fill] (4.8,1.2) circle [radius=0.05];
		\node [left] at (4.8,1.2) {\small 12};
		\draw [fill] (4.8,1.8) circle [radius=0.05];
		\node [left] at (4.8,1.8) {\small 13};
		\draw [fill] (4.8,2.4) circle [radius=0.05];
		\node [above] at (4.8,2.4) {\small 14};
		\draw [fill=red,red] (4.13,2.33) rectangle (4.27,2.47);
		\node [above] at (4.2,2.4) {\small 15};
		\draw [fill] (3.6,2.4) circle [radius=0.05];
		\node [above] at (3.6,2.4) {\small 16};
		
		\draw [fill] (5.4,0.6) circle [radius=0.05];
		\node [right] at (5.4,0.6) {\small 17};
		\draw [fill] (5.4,1.2) circle [radius=0.05];
		\node [left] at (5.4,1.2) {\small 33};
		\draw [fill] (5.4,1.8) circle [radius=0.05];
		\node [right] at (5.4,1.8) {\small 37};
		\draw [fill=red,red] (5.33,2.33) rectangle (5.47,2.47);
		\node [right] at (5.4,2.4) {\small 38};
		\draw [fill] (5.4,3.0) circle [radius=0.05];
		\node [right] at (5.4,3.0) {\small 39};
		\draw [fill] (6.0,1.2) circle [radius=0.05];
		\node [above] at (6.0,1.2) {\small 34};
		\draw [fill=red,red] (6.53,1.13) rectangle (6.67,1.27);
		\node [above] at (6.6,1.2) {\small 35};
		\draw [fill] (7.2,1.2) circle [radius=0.05];
		\node [above] at (7.2,1.2) {\small 36};
		
		\draw [fill=red,red] (5.33,-0.07) rectangle (5.47,0.07);
		\node [right] at (5.4,0.0) {\small 18};
		\draw [fill] (5.4,-0.6) circle [radius=0.05];
		\node [below] at (5.4,-0.6) {\small 19};
		\draw [fill] (4.8,-0.6) circle [radius=0.05];
		\node [below] at (4.8,-0.6) {\small 27};
		\draw [fill] (4.2,-0.6) circle [radius=0.05];
		\node [below] at (4.2,-0.6) {\small 28};
		\draw [fill] (3.6,-0.6) circle [radius=0.05];
		\node [below] at (3.6,-0.6) {\small 29};
		\draw [fill] (3.0,-0.6) circle [radius=0.05];
		\node [below] at (3.0,-0.6) {\small 30};
		\draw [fill] (2.4,-0.6) circle [radius=0.05];
		\node [below] at (2.4,-0.6) {\small 31};
		\draw [fill=red,red] (1.73,-0.67) rectangle (1.87, -0.53);
		\node [below] at (1.8,-0.6) {\small 32};
		\draw [fill] (6.0,-0.6) circle [radius=0.05];
		\node [below] at (6.0,-0.6) {\small 20};
		\draw [fill] (6.6,-0.6) circle [radius=0.05];
		\node [below] at (6.6,-0.6) {\small 21};
		\draw [fill] (7.2,-0.6) circle [radius=0.05];
		\node [below] at (7.2,-0.6) {\small 22};
		\draw [fill] (6.6,0.0) circle [radius=0.05];
		\node [left] at (6.6,0.0) {\small 23};
		\draw [fill=red,red] (6.53,0.53) rectangle (6.67,0.67);
		\node [left] at (6.6,0.6) {\small 24};
		\draw [fill] (7.2,0.6) circle [radius=0.05];
		\node [below] at (7.2,0.6) {\small 25};
		\draw [fill] (7.8,0.6) circle [radius=0.05];
		\node [below] at (7.8,0.6) {\small 26};
		
		\begin{customlegend}[
		legend entries={ 
			Slack bus,
			Generator bus,
			Load bus
		},
		legend style={at={(8,5)},font=\footnotesize}] 
		\addlegendimage{fill=black!100!red,sharp plot}
		\addlegendimage{only marks, mark=square*,fill=black!0!red, draw=white,sharp plot}
		\addlegendimage{mark=*,ball color=black,draw=white,sharp plot}
		\end{customlegend}

		\end{tikzpicture}
		\caption{Modified 123-Bus System from \cite{Bolognani2}.} 
		\label{fig1}
	\end{figure}
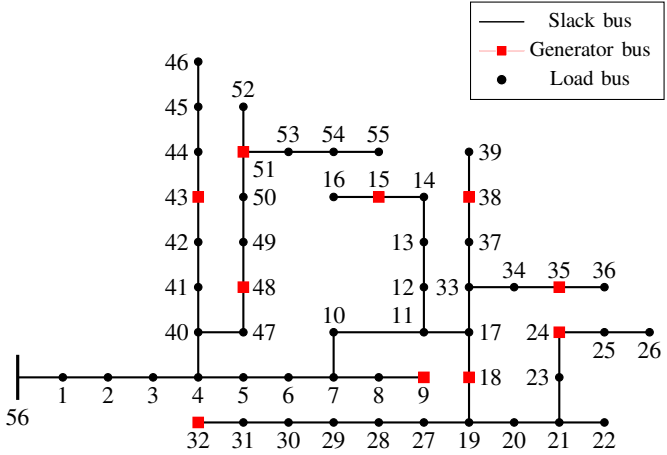
	
	\subsection{System With Constant-Power DG inverters}
	
	Simulations are performed by increasing the load power consumption at each load bus incrementally at the step of 1\% of the base load until power flow fails to converge. Power flow analysis is performed using the open-source package Matpower \cite{Zimmerman1}. The base load of the given system can be found in \cite{Bolognani3}. Table \ref{table1} compares the actual critical loading factors obtained through power flow analysis and those calculated by the proposed $C$-index at different DG penetration levels. 
	\begin{table}[!t]
		\renewcommand{\arraystretch}{1.3}
		\caption{Comparison of Power Flow and $C$-index-based Critical Loading Factors}
		\centering
		\begin{tabular}{cccc}
			\hline
			\multirow{2}{*}{DG penetration} & \multirow{2}{*}{Critical} & Loading factor  & Difference of  \\
			& & when $C$-index  & two loading  \\
			level & loading factor & drops to 1 & factors \\
			\hline
			10\% 	& 4.251 & 4.189 & 1.46\%	\\
			20\% 	& 4.332 & 4.270 & 1.43\%	\\
			30\% 	& 4.413 & 4.348 & 1.47\%	\\
			40\% 	& 4.494 & 4.424 & 1.56\%	\\
			50\% 	& 4.574 & 4.497 & 1.68\%	\\
			60\% 	& 4.654 & 4.566 & 1.89\%	\\
			70\% 	& 4.733 & 4.632 & 2.13\%	\\
			80\% 	& 4.811 & 4.695 & 2.41\%	\\
			90\% 	& 4.889 & 4.755 & 2.74\%	\\
			100\% 	& 4.967 & 4.811 & 3.14\%	\\
			\hline
		\end{tabular}
		\label{table1}
	\end{table}	
	The second column shows the loading factor at which the power flow diverges, i.e., actual critical loading factors. The third column shows the loading factors at which the system-wide $C$-index (i.e., the minimum $C$-index) reaches 1. It is observed that for all cases, the point of the first occurrence of unity $C$-index lies close to the power flow solvability boundary. 
	
%

	
	\begin{figure}[!t]
		\centering
		\begin{tikzpicture}
		\node[inner sep=0pt] (russell) at (0,0)
		{\includegraphics[width=3in]{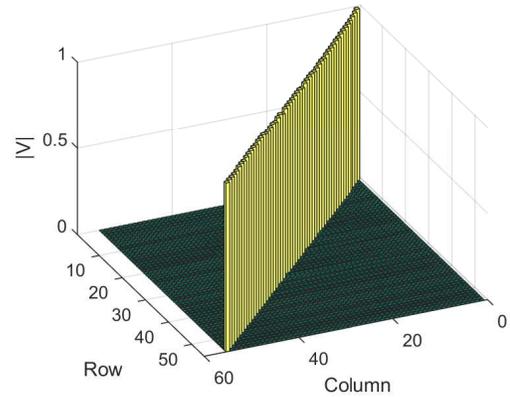}};
		\node [below] at (0,-2.8) {\small (a) Base case};
		\node[inner sep=0pt] (whitehead) at (0,-6.4)
		{\includegraphics[width=3in]{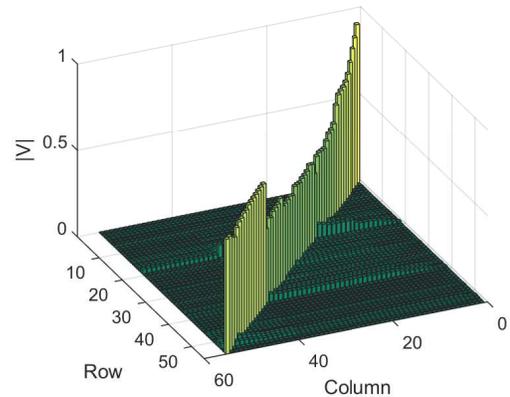}};
		\node [below] at (0,-9.2) {\small (b) Critical point};
		\end{tikzpicture}
		\caption{Visualization of matrix $F$ at (a) base case and (b) critical point with penetration level of 90\%}
		\label{fig2}
	\end{figure}
	
	Fig. \ref{fig2} shows the matrix $J^\mathcal{Z'}$ defined in (\ref{Jzp}) when the DG penetration level is 90\%. $J^\mathcal{Z'}$ is shown since it is proved in Lemma \ref{lemma2} that the determinant of $J^\mathcal{Z'}$ has the same magnitude as the determinant of the complex power flow Jacobian $J^\mathcal{Z}$ in (\ref{comp_jacob}). Instead of showing the full $2n \times 2n$ matrix $J^\mathcal{Z'}$ whose lower blocks are complex conjugate to their upper counterparts, we show a more compact real-valued $n \times n$ matrix $F$ defined as
	\begin{equation}
	F = \left| \left|\frac{\partial S}{\partial I^*}\right| - |B| \right|
	\end{equation}
	where $\partial S/ \partial I^*$ and $B$ are two upper submatrices of $J^\mathcal{Z'}$ and $|\cdot|$ denotes a matrix whose entries are element-wise magnitudes of the original matrix. Notice that the off-diagonal entries of $F$ are the magnitudes of the corresponding entries of the matrix $B$, $F_{ij} = |Z_{ij}I_{j}|, i \neq j$, whereas the diagonal entries of $F$ are the differences between the magnitudes of the diagonal entries of $\partial S/ \partial I^*$ and that of $B$, $F_{ii} = |V_i| - |Z_{ii}I_i|$. That is, $F$ is diagonally dominant if and only if $J^\mathcal{Z'}$ is diagonally dominant, given that the magnitude of the diagonal entries of $\partial S/ \partial I^*$ is larger than that of $B$. 
	
	It is observed from Fig. \ref{fig2}a that, at the base loading condition, $F$ is strongly diagonally dominant in the sense that the diagonal entries are much larger than the sum of the off-diagonal entries. The matrix $F$ gradually loses its diagonal dominance as system stress increases with the decrease of diagonal elements (bus voltage) and increase of off-diagonal elements (voltage drop). This can be seen in Fig. \ref{fig2}b, which shows the matrix $F$ at the critical loading condition.
	
	\begin{figure}[!t]
	\centering
	\includegraphics[width=3.5in]{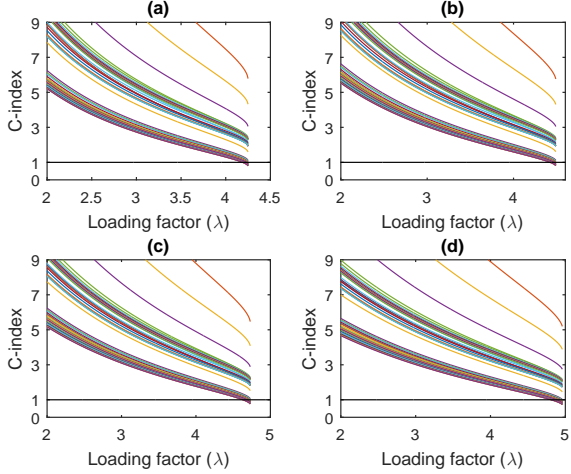}
	\caption{Change of $C$-index at all load buses as system stress increases when system penetration levels are (a) 10\%, (b) 40\%, (c) 70\%, and (d) 100\%.}
	\label{fig3}
	\end{figure}
	
	\begin{figure}[!t]
		\centering
		\begin{tikzpicture}
		\node[inner sep=0pt] (russell) at (0,0)
		{\includegraphics[width=3.7in]{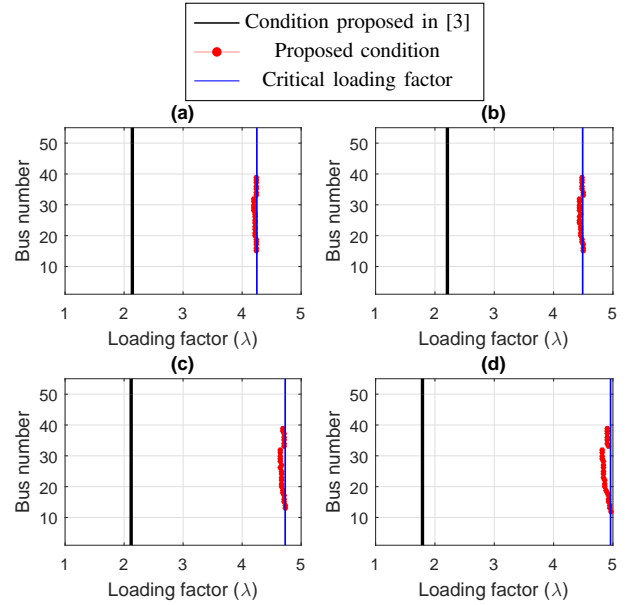}};
		
		\begin{customlegend}[
		legend entries={ 
			Condition proposed in \cite{Bolognani2},
			Proposed condition,
			Critical loading factor
		},
		legend style={at={(2.1,4.6)},font=\footnotesize}] 
		\addlegendimage{fill=black!100!red,sharp plot}
		\addlegendimage{only marks, mark=*,fill=black!0!red, draw=white,sharp plot},
		\addlegendimage{no markers,blue}
		\end{customlegend}
		\end{tikzpicture}
		\caption{The red dots show where $C$-index drops to unity for each bus before critical point when system penetration levels are (a) 10\%, (b) 40\%, (c) 70\%, and (d) 100\%. Black lines show the loading factor when the sufficient condition of solution existence proposed in \cite{Bolognani2} is marginally satisfied. Blue lines show the critical loading factor.}
		\label{fig4}
	\end{figure}
	
	Fig. \ref{fig3} shows the changes of $C$-indices at all PQ buses as the system loading factor increases when DG penetration levels are 10\%, 40\%, 70\% and 100\%, respectively. It can be seen that the indices are monotonically decreasing as system stresses and more than one-third of them drop below unity at the critical point.
	
	Fig. \ref{fig4} shows, by red dots, the point where $C$-index drops to 1 for each bus when system penetration levels are 10\%, 40\%, 70\%, and 100\%, respectively. The buses whose corresponding $C$-indices drop below 1 are in the range of bus numbers 10--40, indicating the severity of stress for those buses. The closeness of the first occurrence of unity index to the actual critical point is demonstrated. The vertical solid lines show the loading factor when the sufficient condition of solution existence proposed in \cite{Bolognani2} is marginally satisfied. It can be seen that the power flow is still solvable when the loading factor exceeds the vertical solid line. However, it becomes insolvable around the red dots. Therefore, the condition given in \cite{Bolognani2} is more conservative, which is in accordance with the argument in Remark \ref{rmk2}.
	

	\subsection{System With Constant-Current DG Inverters}
	
	To demonstrate the effectiveness of the proposed method in analyzing a system with constant-current DG inverters, we replace the constant-power sources at buses 9, 24, 35, 43, and 51 with constant-current ones. Fig. \ref{fig5} shows $C$-indices at all buses at the system critical point when DG penetration levels are (a) 10\%, (b) 40\%, (c) 70\%, and (d) 100\%. There are vacancies in the figures because buses with constant current sources are removed from the complex power flow Jacobian $J^{\mathcal{Z}}$, and their $C$-indices are not calculated. It can be seen that some indices are less than 1 at the critical loading condition, thus validating the extension of the proposed condition to systems with constant current buses.
	\begin{figure}[!t]
		\centering
		\includegraphics[width=3.5in]{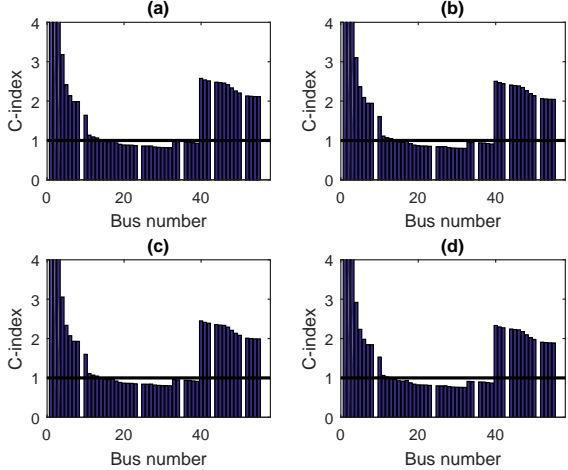}
		\caption{$C$-index at all PQ buses at critical point when DG penetration level are (a) 10\%, (b) 40\%, (c) 70\%, and (d) 100\%. Buses 9, 24, 35, 43, and 51 are constant current sources.}
		\label{fig5}
	\end{figure}
	
	\begin{table}[!t]
		\renewcommand{\arraystretch}{1.3}
		\caption{Comparison of Power Flow and $C$-index-based Critical Loading Factors for Systems with Constant-Current DGs}
		\centering
		\begin{tabular}{cccc}
			\hline
			\multirow{2}{*}{DG penetration} & \multirow{2}{*}{Critical} & Loading factor  & Difference of  \\
			& & when $C$-index  & two loading  \\
			level & loading factor & drops to 1 & factors \\
			\hline
			10\% 	& 4.522 & 4.456 & 1.46\%	\\
			20\% 	& 4.589 & 4.523 & 1.44\%	\\
			30\% 	& 4.656 & 4.588 & 1.46\%	\\
			40\% 	& 4.721 & 4.651 & 1.48\%	\\
			50\%	& 4.786 & 4.712 & 1.55\%	\\
			60\% 	& 4.851 & 4.771 & 1.65\%	\\
			70\% 	& 4.915 & 4.828 & 1.77\%	\\
			80\% 	& 4.978 & 4.883 & 1.91\%	\\
			90\% 	& 5.041 & 4.937 & 2.06\%	\\
			100\% 	& 5.103 & 4.989 & 2.23\%	\\
			\hline
		\end{tabular}
		\label{table2}
	\end{table}
	
	Table \ref{table2} compares the actual critical loading factors obtained through power flow analysis and those calculated by the proposed $C$-index at different DG penetration levels for a system with constant-current DGs. It is seen from the table that the differences of the two loading factors induced by systems with constant-current DGs are smaller compared to systems with only constant-power DGs.
	
	\section{Discussions}
	\label{discussion}
	In this section, we discuss the reasons of the minor mismatch of the proposed and actual critical loading factors, compare the stability indices for systems with different types of DGs, and introduce the physical meaning of the proposed condition.

	First of all, we point out that the critical loading factor provided by the proposed condition is closer to the actual one when loads are changing proportionally. This can be explained as follows: consider the system introduced in Section \ref{modelling} with $n+1$ buses where bus 0 is the slack bus and buses 1 to $n$ are PQ buses. If the power injections of the PQ buses change in a way such that their current injections are proportional (magnitude- and angle-wise), then Kessel and Glavitsch \cite{Kessel1} showed that the steady-state voltage instability occurs when there is a bus $j$ such that 
	\begin{equation}
	\left| \sum_{i=1}^n Z_{ji}I_i \right| = |V_j|.
	\end{equation}
	However, the above condition only holds when PQ bus current injections are always proportional, which is unrealistic. In particular, the condition may be met either before or after actual voltage stability point if the PQ bus current injections are disproportional, which is a major drawback. However, when power injections are proportional, the assumption of proportional current injection is approximately satisfied since bus voltages are close to 1 under normal operating conditions and their changes tend to be homogeneous as well. Therefore, the condition in \cite{Kessel1} works relatively well under proportional load variations, and it becomes less effective as load variation deviates from the assumed proportional pattern.

	Notice the similarity between the condition in \cite{Kessel1} and our proposed sufficient condition for voltage stability,
	\begin{equation}
	\sum_{i=1}^n \left| Z_{ji}I_i \right| < |V_j|, \qquad j = 1, \ldots, n.
	\label{cond_repeat}
	\end{equation}
	In fact, the proposed condition can be regarded as a generalization of the one in \cite{Kessel1}. By the same token, the mismatch between the critical loading factors given by the proposed condition and the actual one is smaller when the power injections are proportional, and becomes larger when the disproportionality of power injections increases. As a special case, the critical loading factor provided by the proposed condition coincides with that in \cite{Kessel1} when all lines in the system have the same $r/x$ ratio and all PQ bus current injections have an identical phase angle. This is because under these assumptions all summands $Z_{ji}I_i$ on the left side in the proposed condition (\ref{cond_repeat}) are in phase and the absolute value operator can be moved outside the summation. In this manuscript, constant-power DGs in the system are simulated such that their outputs remain unchanged as load demands increase. As such, the power injection disproportionality rises with an increasing penetration level of constant-power DGs. So the difference between the two loading factors becomes larger with the increase of PV penetration level as shown in Table \ref{table1}.
	
	On the other hand, the penetrations of constant-current DGs do not affect the proportionality of power injections, since constant-current DGs are linear elements from circuit analysis perspective and are not included in constant-power buses. Hence, their current contributions are not included in the left side of the proposed condition (\ref{cond_repeat}). Rather, the contributions of constant-current DGs are modeled as a modification to the equivalent voltage source $E$ as explained in Section \ref{consti}. Therefore, the mismatch between the proposed and actual loading factors is larger for constant-power DGs since their penetration leads to disproportional variations of power injections.
	
	However, the difference between the two loading factors does not necessarily reflect the accuracy of the proposed condition. The system critical loading factors in Tables \ref{table1} and \ref{table2} are obtained by assuming a specific power variation pattern (i.e., constant DG power/current injection and proportionally increasing load demands in this paper). For instance, if outputs of constant-power DGs decrease as load demands increase, then the critical loading factor would be smaller. Nevertheless, the proposed index provides a lower bound for the \emph{smallest} critical loading factor. The comparison of the critical loading factor by the proposed method and the worst critical loading factor is beyond the scope of the manuscript. However, our main point is that by relaxing the condition proposed in \cite{Kessel1}, we have proved rigorously that the new condition (\ref{cond_repeat}) guarantees voltage stability under all power variation patterns, not only when the constant-power bus current injections are changing proportionally.
	
	
	\section{Conclusions}
	\label{conclsn}
	This paper proposes a necessary condition for the power flow insolvability in a distribution system with DGs. The condition is proved through detailed mathematical derivation. It is shown that the proposed condition provides a precursor for power flow insolvability by setting an operating condition-dependent upper bound in the power injection space. Based on the necessary condition, a new index is designed to monitor the operating condition and it provides a precursor to voltage instability. We verify the effectiveness of the proposed condition and index via numerical simulations on a distribution test system with different types and penetration levels of DGs. The advantages of the proposed method can be summarized as 1) it is adaptive to system operating conditions, 2) the calculation only needs the present snapshot of voltage phasors, and 3) it requires a small computation effort. The proposed method can be used to assist the planning, online monitoring and operation of power distribution systems with DGs.

	\ifCLASSOPTIONcaptionsoff
	\newpage
	\fi

	
	
	%

	%
	\begin{IEEEbiography}[{\includegraphics[width=1in,height=1.25in,clip,keepaspectratio]{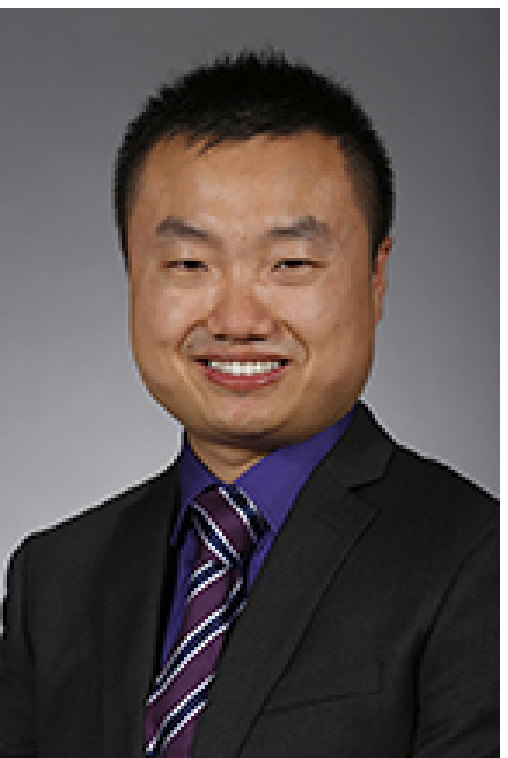}}]{Zhaoyu Wang}
		is an Assistant Professor at Iowa State Unviersity. He received Ph.D. degree in Electrical and Computer Engineering from Georgia Institute of Technology in 2015. He received B.S. degree and M.S. degree in Electrical Engineering from Shanghai Jiao Tong University in 2009 and 2012, respectively, and the M.S. degree in Electrical and Computer Engineering from Georgia Institute of Technology in 2012. His research interests include power distribution systems, microgrids, renewable integration, self-healing resilient power systems, and voltage/VAR control. He was a Research Aid in 2013 at Argonne National Laboratory and an Electrical Engineer at Corning Incorporated in 2014.
	\end{IEEEbiography}	
	
	\begin{IEEEbiography}[{\includegraphics[width=1in,height=1.25in,clip,keepaspectratio]{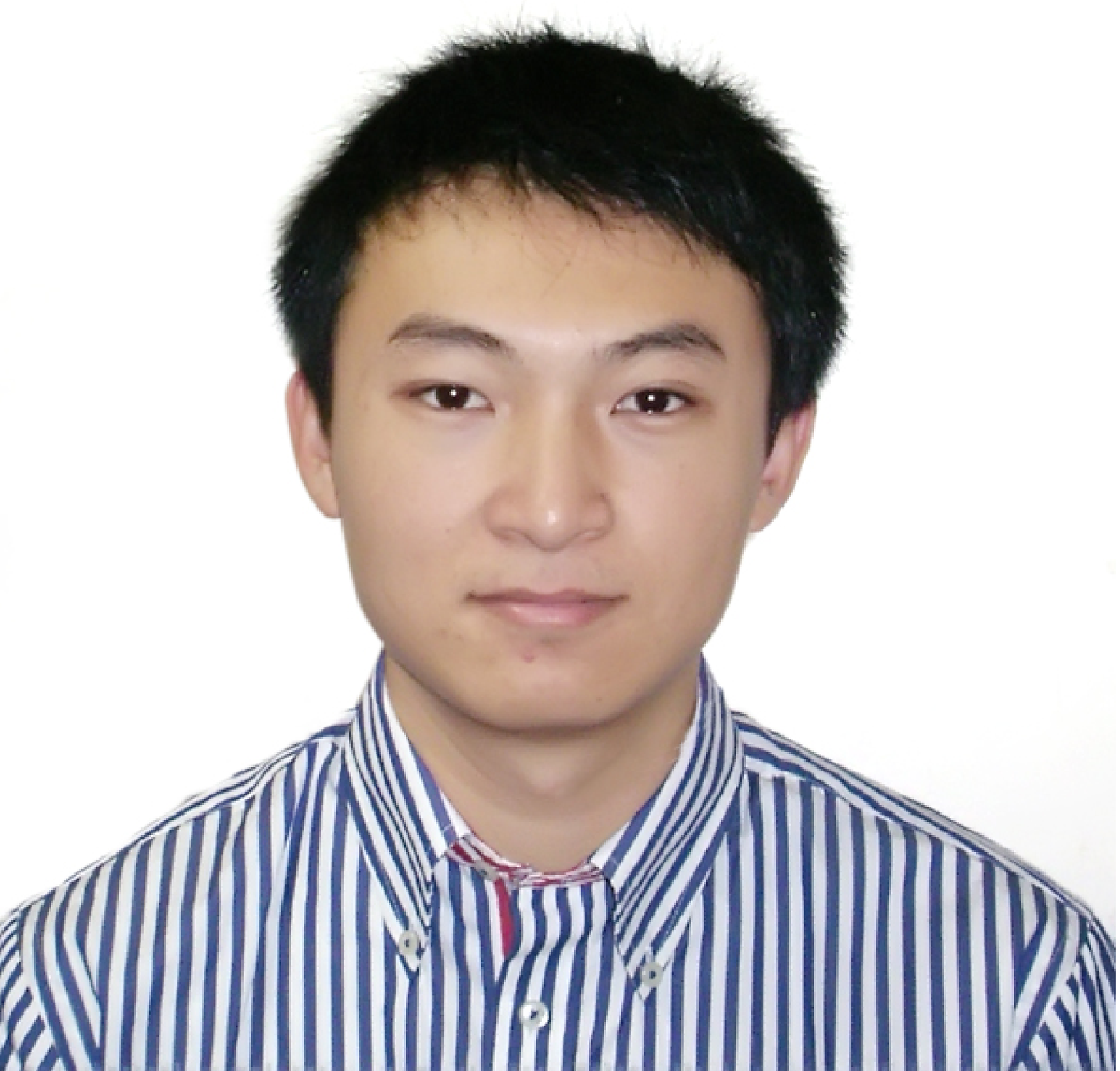}}]{Bai Cui}
		received the B.S. degree in Electrical Engineering from Shanghai Jiao Tong University, Shanghai, China in 2011, and the B.S. degree in Computer Engineering from the University of Michigan, Ann Arbor, MI, in 2011, and the M.S. degree in Electrical and Computer Engineering from the Georgia Institute of Technology, Atlanta, GA, in 2014. He is currently working towards the Ph.D. degree in the School of Electrical and Computer Engineering, Georgia Institute of Technology. His research interests include renewable integration and power system stability and control.
	\end{IEEEbiography}
	
	\begin{IEEEbiography}[{\includegraphics[width=1in,height=1.25in,clip,keepaspectratio]{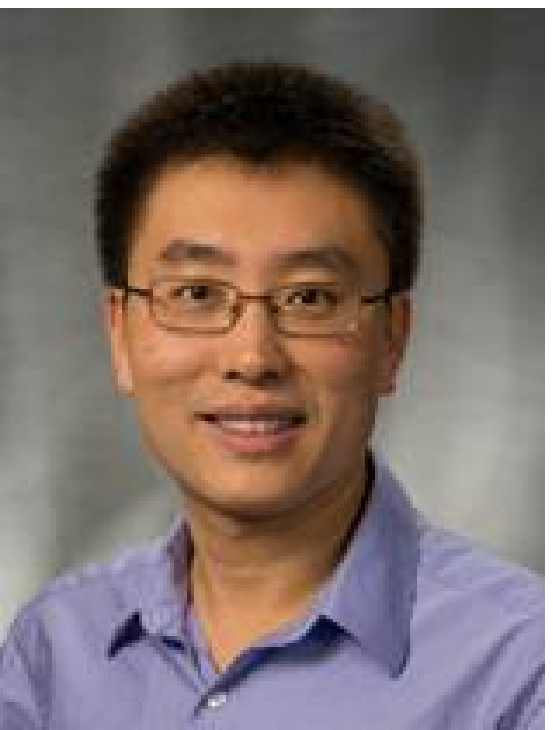}}]{Jianhui Wang}
		(M'07--SM'12) received the Ph.D. degree in electrical engineering from Illinois Institute of Technology, Chicago, IL, USA, in 2007.
		Presently, he is the Section Lead for Advanced Power Grid Modeling at the Energy Systems Division at Argonne National Laboratory, Argonne, IL, USA.
		
		Dr. Wang is the secretary of the IEEE Power \& Energy Society (PES) Power System Operations Committee. He is an associate editor of Journal of Energy Engineering and an editorial board member of Applied Energy. He is also an affiliate professor at Auburn University and an adjunct professor at University of Notre Dame. He has held visiting positions in Europe, Australia and Hong Kong including a VELUX Visiting Professorship at the Technical University of Denmark (DTU). Dr. Wang is the Editor-in-Chief of the \textsc{IEEE Transactions on Smart Grid} and an IEEE PES Distinguished Lecturer. He is also the recipient of the IEEE PES Power System Operation Committee Prize Paper Award in 2015.
	\end{IEEEbiography}

	
	
	
	
	
	
	

\end{document}